\documentclass[12pt]{article}
\usepackage{amsmath,amssymb}
\usepackage[T1]{fontenc}
\usepackage{latexsym}
\usepackage{pdfsync}
\usepackage{epsfig}
\usepackage{proof}
\usepackage[utf8]{inputenc}
\usepackage[a4paper, top=3cm, bottom=3cm ]{geometry}
%\input amstex

%\begin{figure}[ht]
%\epsfxsize=2in
%\epsffile{1-3convexup.ps}
%\end{figure}

%\numberwithin{equation}{section}

\newtheorem{thm}{Theorem}[section]

\newtheorem{lem}{Lemma}[section]

\newtheorem{prop}{Proposition}[section]

\title
{$L^p$ boundedness of the Bergman projection \\ on the generalized Hartogs triangles.}

\author{\normalsize Tomasz Beberok \\
\small Faculty of Mathematics and Computer Science, Jagiellonian University,\\
\small Lojasiewicza 6, 30-048 Krakow, Poland \\}

\date{}

\begin{document}

\begin{center}
  \textbf{$L^p$ boundedness of the Bergman projection \\ on the generalized Hartogs triangles
}
\end{center}
\vskip1em
\begin{center}
  Tomasz Beberok
\end{center}

\vskip2em

\noindent \textbf{Abstract.} In this paper we investigate a two classes of domains in $\mathbb{C}^n$ generalizing the Hartogs triangle. We prove optimal estimates for the mapping properties of the Bergman projection on  these domains.
\vskip1em

\textbf{Keyword:} Hartogs triangle; Bergman projection; Bergman kernel
\vskip1em
\textbf{AMS Subject Classifications:} 32W05; 32A25

\section{Introduction}
Let $\Omega \subset \mathbb{C}^n$ be a bounded domain. The Bergman space $L^2_a(\Omega)$ is defined to
be the intersection $L^2(\Omega) \cap  \mathcal{O}(\Omega)$ of the space $L^2(\Omega)$ of square integrable functions on $\Omega$ (with respect to the Lebesgue measure of $\mathbb{C}^n$) with the space $\mathcal{O}(\Omega)$ of holomorphic functions on $\Omega$. By the Bergman inequality, $L^2_a(\Omega)$ is a closed subspace of $L^2(\Omega)$. The orthogonal projection operator $\mathbf{P} \colon L^2(\Omega) \rightarrow L^2_a(\Omega)$ is the Bergman projection associated with the domain $\Omega$. It follows from the Riesz representation theorem that the Bergman projection is an integral operator with the kernel $K_{\Omega} (z, w)$ on $\Omega \times \Omega$, i.e. $\mathbf{P} f(z) = \int_{\Omega} K_{\Omega} (z, w)f(w)\,dV(w)$ for all $f \in L^2(\Omega)$. The Bergman kernel depends on the choice of $\Omega$ and is also represented by
\begin{align*}
K_{\Omega}(z,w)= \sum_{j=0}^{\infty} \phi_j(z) \overline{\phi_j(w)}, \quad (z,w) \in \Omega \times \Omega,
\end{align*}
where $\{\phi_j(\cdot) \colon  j = 0, 1, 2, . . .\}$ is a complete orthonormal basis for $L^2_a(\Omega)$. If $\Omega$ is the Hermitian unit ball $\mathbb{B}_n$ defined by
\begin{align*}
\mathbb{B}_n=\{z \in \mathbb{C}^n \colon |z_1|^2  + \ldots + |z_n|^2 < 1 \},
\end{align*}
it is easy to see that $z^{\alpha}$, $\alpha \in \mathbb{Z}_{+}^n $ form an orthogonal basis of $L^2_a(\mathbb{B}_n)$. A direct computation shows that $\| z^{\alpha} \| = \sqrt{ \frac{\alpha ! \pi^n }{(n + | \alpha|)!} }$. So the functions $\varphi_{\alpha} = \sqrt{ \frac{(n + | \alpha|)!}{\alpha ! \pi^n} } z^{\alpha}$, $\alpha \in \mathbb{Z}_{+}^n $, form an orthonormal basis of $L^2_a(\mathbb{B}_n)$. An easy computation gives:
    \begin{align}\label{ball}
        K_{\mathbb{B}_n}(z,w)=\frac{n!}{\pi^n} \frac{1}{(1- \langle z,w \rangle)^{n+1}},
    \end{align}
where $\langle z,w \rangle := z_1\overline{w}_1 + \ldots + z_n\overline{w}_n$.
Similarly as before, one can show that
\begin{align}\label{poydisc}
  K_{\mathbb{D}^n}(z,w)=\prod_{j=1}^{n}K_{\mathbb{D}}(z_j,w_j)= \frac{1}{\pi^n} \prod_{j=1}^{n} \frac{1}{(1-z_j \overline{w}_j)^{2}},
\end{align}
where $\mathbb{D}=\{z \in \mathbb{C} \colon |z|<1 \}$.
One of the features that makes the Bergman kernel both important and useful is its invariance under biholomorphic mappings. This fact is useful in conformal mapping theory, and it also gives rise to the Bergman metric. The fundamental result is as follows.
    \begin{prop}\label{biholo}
      Let $\Omega_1$ and $\Omega_2$ be domains in $\mathbb{C}^n$, and let $f \colon  \Omega_1 \rightarrow  \Omega_2$ be biholomorphic. Then
           \begin{align*}
             \det J_{\mathbb{C}} f(z) K_{\Omega_2}(f(z),f(w)) \det \overline{ J_{\mathbb{C}} f(w) }=K_{\Omega_1}(z,w).
           \end{align*}
    \end{prop}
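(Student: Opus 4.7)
The plan is to use the characterization of the Bergman kernel as the (unique) reproducing kernel of $L^2_a(\Omega)$, together with the change-of-variables formula for the complex Jacobian. The main tool is the pullback operator induced by $f$.

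First, I would introduce the operator $T \colon L^2(\Omega_2) \to L^2(\Omega_1)$ defined by
\begin{align*}
(Tg)(z) = \det J_{\mathbb{C}} f(z) \, g(f(z)),
\end{align*}
and show it is a unitary isomorphism that restricts to a unitary isomorphism $L^2_a(\Omega_2) \to L^2_a(\Omega_1)$. The isometry claim relies on the fact that for a holomorphic map the real Jacobian (with respect to Lebesgue measure on $\mathbb{C}^n \cong \mathbb{R}^{2n}$) is $|\det J_{\mathbb{C}} f|^2$; substituting $w=f(z)$ in the integral $\int_{\Omega_1}|Tg|^2\,dV$ then directly yields $\|g\|^2_{L^2(\Omega_2)}$. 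Holomorphy of $Tg$ when $g$ is holomorphic is immediate since $\det J_{\mathbb{C}} f$ is a holomorphic function. The inverse of $T$ is the analogous pullback operator for $f^{-1}$.

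Next, I would fix $w \in \Omega_1$ and consider the candidate function
\begin{align*}
h_w(z) := \det J_{\mathbb{C}} f(z) \, K_{\Omega_2}(f(z),f(w)) \, \overline{\det J_{\mathbb{C}} f(w)}.
\end{align*}
The factor $\overline{\det J_{\mathbb{C}} f(w)}$ is a constant in $z$, so $h_w$ lies in $L^2_a(\Omega_1)$ by the first step applied to $g = K_{\Omega_2}(\,\cdot\,,f(w)) \in L^2_a(\Omega_2)$. It then suffices to verify the reproducing property: for every $\phi \in L^2_a(\Omega_1)$,
\begin{align*}
\int_{\Omega_1} \phi(z) \overline{h_w(z)}\,dV(z) = \phi(w).
\end{align*}
This is obtained by setting $\zeta = f(z)$, so that $dV(z) = |\det J_{\mathbb{C}} f(z)|^{-2}\,dV(\zeta)$; the integral rearranges into $\det J_{\mathbb{C}} f(w) \int_{\Omega_2} (T^{-1}\phi)(\zeta)\, \overline{K_{\Omega_2}(\zeta, f(w))}\,dV(\zeta)$, which by the reproducing property of $K_{\Omega_2}$ equals $\det J_{\mathbb{C}} f(w)\cdot (T^{-1}\phi)(f(w)) = \phi(w)$. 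By uniqueness of the reproducing kernel of $L^2_a(\Omega_1)$, this forces $h_w(z) = K_{\Omega_1}(z,w)$, which is the claimed identity.

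No step presents a serious obstacle; the only place requiring care is bookkeeping the holomorphic versus antiholomorphic Jacobian factors (so that the unitarity computation produces $|\det J_{\mathbb{C}} f|^2$ and matches the Lebesgue change of variables), and checking that each factor lands on the correct variable in the final identity.
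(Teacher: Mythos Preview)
Your argument is correct and is the standard reproducing-kernel proof of this transformation law. The paper does not actually prove this proposition; it states it as a known fact and refers the reader to Krantz \cite{Kra}, so there is nothing to compare against.
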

\noindent Here $\det J_{\mathbb{C}} f$ is the complex Jacobian matrix of the mapping $f$ (see \cite{Kra} for more on this topic).
 \newline \indent
It is natural to consider the mapping properties of $\mathbf{P}$ on other spaces of functions on $\Omega$, for example the $L^p$ spaces. The $L^p$ mapping properties of the Bergman projection have been determined for large classes of domains we refer to the following articles and the references therein \cite{Ch,KP,LS,LS2,MN1,MN2,NRSW,PS,Zey}. In \cite{Ch}, Chakrabarti and Zeytuncu considered the classical Hartogs triangle and proved that the Bergman projection is a bounded operator from $L^p$ to $L^p_a$ if and only if $4/3<p<4$. In recent paper \cite{Edholm},  L.D. Edholm and J.D. McNeal generalized result obtained in \cite{Ch} to the domains defined by
\begin{align*}
  \{(z_1,z_2) \in \mathbb{C}^2 \colon |z_1|^k < |z_2|<1\},
\end{align*}
for every $k \in \mathbb{N}:=\{1,2,3,...\}$. Motivated by their work, in this paper we generalize their result to the following domains
\begin{align*}
  &\Omega_{k}:= \left\{(z,w) \in \mathbb{C}^{1+n} \colon   |w_1| < |z|^k <1,  \ldots,|w_1| < |z|^k <1  \right\}\\
  &\mathcal{H}_k:=\left\{(z,w) \in \mathbb{C}^{1+n} \colon   |w_1|^2+  \cdots + |w_n|^2 < |z|^{2k} <1  \right\},
\end{align*}
where again $k \in \mathbb{N}$. See \cite{Zap} for  the group of holomorphic automorphisms of $\mathcal{H}_k$ and proper holomorphic mappings between generalized Hartogs triangles.

\subsection{The Bergman kernel for  $\Omega_{k}$ and $\mathcal{H}_k$ }
Now we discuss the Bergman kernel for $\Omega_{k}$ and $\mathcal{H}_k$.
   \begin{thm} The Bergman kernel for $\Omega_{k}$ is given by
     \begin{align*}
       K_{\Omega_{k}}((z,w),(x,y))=  \frac{\eta^{nk}}{ \pi^{n+1} (1-\eta)^2  \prod_{j=1}^{n} (\eta^{k} - \nu_j)^2 },
     \end{align*}
   where $\nu_j=w_j \overline{y}_j$ for every $j=1,\ldots,n$ and $\eta=z\overline{x}$.
   \end{thm}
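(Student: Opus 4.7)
The plan is to exhibit an orthonormal basis of $L^2_a(\Omega_k)$ consisting of (suitably normalized) monomials and then sum the series $\sum_j \phi_j(z,w)\overline{\phi_j(x,y)}$ in closed form. The domain $\Omega_k$ is a (non-complete) Reinhardt domain which omits $\{z=0\}$ (because $|w_j|<|z|^k$ forces $z\neq 0$) while containing points with any $w_j=0$, so the candidates for basis elements are the monomials $z^a w^\beta$ with $\beta\in\mathbb{Z}_+^n$ and $a\in\mathbb{Z}$. The torus invariance $(z,w)\mapsto (e^{i\theta_0}z, e^{i\theta_1}w_1,\ldots,e^{i\theta_n}w_n)$ of $\Omega_k$ makes two such monomials with distinct exponents automatically orthogonal.

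Passing to polar coordinates $z=re^{i\theta}$, $w_j=\rho_j e^{i\varphi_j}$ and iterating the integration first in $\rho_1,\ldots,\rho_n$ over $(0,r^k)$ and then in $r$ over $(0,1)$, a direct computation yields
\[
\|z^a w^\beta\|^2 = \frac{\pi^{n+1}}{\bigl(\prod_{j=1}^{n}(\beta_j+1)\bigr)\bigl(a+1+k\sum_{j=1}^{n}(\beta_j+1)\bigr)},
\]
valid precisely when $\beta_j\geq 0$ for every $j$ and $a > -1 - k\sum_{j=1}^n(\beta_j+1)$. Completeness of this monomial family in $L^2_a(\Omega_k)$ follows from the Reinhardt structure by expanding an arbitrary $f\in L^2_a(\Omega_k)$ into a Laurent series in $z$ and a Taylor series in $w$ and observing that each monomial occurring lies in the admissible range above.

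With this basis in hand, the Bergman kernel becomes
\[
K_{\Omega_k}((z,w),(x,y)) = \frac{1}{\pi^{n+1}}\sum_{\beta\in\mathbb{Z}_+^n}\Bigl(\prod_{j=1}^n(\beta_j+1)\Bigr)\nu^\beta\!\!\sum_{a\geq -k\sum_j(\beta_j+1)}\!\!\bigl(a+1+k\textstyle\sum_j(\beta_j+1)\bigr)\eta^a.
\]
Setting $M(\beta):= k\sum_j(\beta_j+1)$ and shifting $a\mapsto a-M(\beta)$ reduces the inner sum to $\sum_{m\geq 0}(m+1)\eta^{m}=(1-\eta)^{-2}$, producing the factor $\eta^{-M(\beta)}(1-\eta)^{-2}$. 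The remaining sum over $\beta$ factors as a product of $n$ copies of
\[
\sum_{\beta_j=0}^{\infty}(\beta_j+1)\nu_j^{\beta_j}\eta^{-k(\beta_j+1)} = \frac{\eta^k}{(\eta^k-\nu_j)^2},
\]
by the same identity, and recombining the powers of $\eta$ gives the stated formula.

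The step I expect to be most delicate is pinning down the exact admissible range of exponents --- in particular the fact that $a$ may be (moderately) negative, reflecting the absence of $\{z=0\}$ from $\Omega_k$ --- since this range controls both the norm formula and the index set of the final double sum. Once it is correctly set up, the closed-form evaluation is just a double application of $(1-t)^{-2}=\sum_{m\geq 0}(m+1)t^m$.
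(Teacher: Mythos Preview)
Your argument is correct, but it takes a different route from the paper. The paper observes that the map $\phi(z,w)=(z,w_1/z^k,\ldots,w_n/z^k)$ is a biholomorphism from $\Omega_k$ onto $\mathbb{D}^*\times\mathbb{D}^n$, whose Bergman kernel coincides with that of the polydisc $\mathbb{D}^{n+1}$ (the puncture is $L^2$-removable); the stated formula then drops out of the transformation law $K_{\Omega_k}=(\det J_{\mathbb{C}}\phi)\,K_{\mathbb{D}^{n+1}}(\phi,\phi)\,\overline{(\det J_{\mathbb{C}}\phi)}$ with $\det J_{\mathbb{C}}\phi=z^{-nk}$. Your approach instead builds the monomial basis of $L^2_a(\Omega_k)$ directly, computes the norms, and sums the resulting double series in closed form via two applications of $(1-t)^{-2}=\sum_{m\ge 0}(m+1)t^m$. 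The biholomorphism method is shorter and more conceptual, leaning on the known polydisc kernel and Proposition~\ref{biholo}; your method is more self-contained and has the side benefit of producing the explicit orthogonal basis $\{z^a w^\beta:\beta\in\mathbb{Z}_+^n,\ a\ge -k(|\beta|+n)\}$ together with the exact norm constants, information the paper in fact needs later in the proof of Theorem~\ref{main}.
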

                     \begin{proof}
                     Using the biholomorphism $\phi \colon  \Omega_{k} \rightarrow \mathbb{D}^{*} \times \mathbb{D}^n$ defined by $\phi(z,w_1,\ldots,w_n):=(z,\frac{w_1}{z^k},\ldots,\frac{w_n}{z^k})$, well-known formula (\ref{poydisc}) and Proposition \ref{biholo} we can easily prove the desired result.
                     \end{proof}
Using the same function $\phi \colon  \mathcal{H}_{k} \rightarrow \mathbb{D}^{*} \times \mathbb{B}_n$, formula (\ref{ball}) and the behavior of the Bergman kernel under biholomorphic mappings we have
\begin{thm} The Bergman kernel for $\mathcal{H}_{k}$ is given by
     \begin{align}\label{kernelH}
       K_{\mathcal{H}_{k}}((z,w),(x,y))=  \frac{\eta^{k}}{ \pi^{n+1} (1-\eta)^2   (\eta^{k} - \nu_1- \cdots -\nu_n )^{n+1} }.
     \end{align}
   \end{thm}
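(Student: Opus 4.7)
The plan is to mirror exactly the argument used for $\Omega_k$, replacing the polydisc kernel (\ref{poydisc}) with the ball kernel (\ref{ball}) at the appropriate step. First I would observe that removing the zero fiber $\{z=0\}$ from the relevant domains is harmless: $\mathbb{D}^\ast$ and $\mathbb{D}$ have identical Bergman spaces (and hence the same reproducing kernel $\frac{1}{\pi(1-a\overline{c})^{2}}$), because a complex-analytic set is a null set for Lebesgue measure and bounded holomorphic functions extend across it. Consequently $K_{\mathbb{D}^{\ast}\times\mathbb{B}_n}$ is the product of $K_{\mathbb{D}}$ with $K_{\mathbb{B}_n}$, and the map $\phi(z,w)=(z,w_1/z^{k},\dots,w_n/z^{k})$ is a genuine biholomorphism $\mathcal{H}_k\setminus\{z=0\}\to\mathbb{D}^{\ast}\times\mathbb{B}_n$.

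Next I would compute the complex Jacobian of $\phi$. Ordering coordinates as $(z,w_1,\dots,w_n)$, the matrix $J_{\mathbb{C}}\phi$ is block lower-triangular: the top row is $(1,0,\dots,0)$, and the lower $n\times n$ block is the scalar matrix $z^{-k}I_n$, so $\det J_{\mathbb{C}}\phi(z,w)=z^{-nk}$. Proposition~\ref{biholo} therefore yields
\begin{align*}
K_{\mathcal{H}_k}\bigl((z,w),(x,y)\bigr)=\frac{1}{(z\overline{x})^{nk}}\,K_{\mathbb{D}^{\ast}\times\mathbb{B}_n}\bigl(\phi(z,w),\phi(x,y)\bigr).
\end{align*}

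Now I would insert (\ref{ball}) together with $K_{\mathbb{D}}$. Writing $\eta=z\overline{x}$ and $\nu_j=w_j\overline{y}_j$, the Hermitian inner product in the ball factor becomes
\begin{align*}
\Bigl\langle\bigl(w_j/z^{k}\bigr)_j,\bigl(y_j/x^{k}\bigr)_j\Bigr\rangle=\frac{\nu_1+\cdots+\nu_n}{\eta^{k}},
\end{align*}
so that $\bigl(1-\langle\cdot,\cdot\rangle\bigr)^{n+1}=\eta^{-k(n+1)}(\eta^{k}-\nu_1-\cdots-\nu_n)^{n+1}$. Assembling the three pieces gives a factor $\eta^{k(n+1)}$ in the numerator from the ball kernel, against $\eta^{nk}$ in the denominator from the Jacobians, leaving the claimed $\eta^{k}$ and the stated denominator.

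The proof has no real obstacle; it is a direct computation. The only place where a slip is easy is the bookkeeping of exponents of $\eta$: one must check carefully that the Jacobian contribution $\eta^{-nk}$ combines with the power $\eta^{k(n+1)}$ coming from clearing denominators in $(1-\langle\cdot,\cdot\rangle)^{n+1}$ to yield exactly $\eta^{k}$, and that the single factor $1/(1-\eta)^{2}$ coming from the disc kernel is untouched by this rearrangement. Everything else is immediate from the two theorems already invoked.
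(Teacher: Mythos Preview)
Your argument is correct and is precisely the approach the paper takes: apply Proposition~\ref{biholo} to the biholomorphism $\phi(z,w)=(z,w_1/z^{k},\dots,w_n/z^{k})$ from $\mathcal{H}_k$ onto $\mathbb{D}^{\ast}\times\mathbb{B}_n$, and insert the ball kernel (\ref{ball}) together with the disc kernel. Two small remarks: the set $\{z=0\}$ is already disjoint from $\mathcal{H}_k$ (since $|w|^{2}<|z|^{2k}$ forces $z\neq 0$), so no excision is needed on the source side; and if you track the constant from (\ref{ball}) through your computation you will find an extra factor of $n!$ in the numerator, which the paper's displayed formula appears to omit.
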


\section{$L^p$ boundedness of the Bergman projection}
Before we give the main result of this work we prove the following lemma that will be used to prove the main theorem.
    \begin{lem}For every $\epsilon \in \left[\frac{1}{2}, \frac{kn+2}{2kn}\right)$, we have
           \begin{align}
             \int_{\Omega_{k}} \left|K_{\Omega_{k}}((z,w),(x,y))\right| h(x,y)^{-\epsilon} \, dV(x,y) \leq C h(z,w)^{-\epsilon} \label{est1}\\
            \int_{\mathcal{H}_k} \left|K_{\Omega_{k}}((z,w),(x,y))\right| g(x,y)^{-\epsilon} \, dV(x,y) \leq C_1 g(z,w)^{-\epsilon} \label{est2}
           \end{align}
    for some constants $C$ and $C_1$, where
            \begin{align*}
              h(x,y)&=(1-|x|^2)|x|^{2k(n-1)}(|x|^{2k}-|y_1|^2-\cdots - |y_n|^2) \\
              g(x,y)&=(1-|x|^2)(|x|^{2k} - |y_1|^2)\cdots(|x|^{2k} - |y_n|^2).
            \end{align*}
    \end{lem}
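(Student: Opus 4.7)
The plan is to reduce both estimates to classical Forelli--Rudin type integrals on the disc, polydisc, and ball by exploiting the biholomorphism $\phi(z,w)=(z,w_1/z^k,\ldots,w_n/z^k)$, which sends $\Omega_k$ onto $\mathbb{D}^{\ast}\times \mathbb{D}^n$ and $\mathcal{H}_k$ onto $\mathbb{D}^{\ast}\times \mathbb{B}_n$. Concretely, I would first perform the change of variable $y=x^k u$, whose Jacobian is $|x|^{2kn}$ and which pushes the inner domain of integration to $\mathbb{D}^n$ (for the product weight $g$, matched to $\Omega_k$) or to $\mathbb{B}_n$ (for the sum weight $h$, matched to $\mathcal{H}_k$).

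Under this substitution, the non-holomorphic pieces of the kernel simplify as $|\eta^k-\nu_j|=|x|^k|z^k-w_j\bar u_j|$ and $|\eta^k-\nu_1-\cdots-\nu_n|=|x|^k|z^k-w_1\bar u_1-\cdots-w_n\bar u_n|$, while the weights factor as $|x|^{2k}-|y_j|^2=|x|^{2k}(1-|u_j|^2)$ and $|x|^{2k}-(|y_1|^2+\cdots+|y_n|^2)=|x|^{2k}(1-|u|^2)$. Collecting all powers of $|x|$ produces an $x$--factor of the shape $|x|^{kn(1-2\epsilon)}(1-|x|^2)^{-\epsilon}|1-z\bar x|^{-2}$; the exponent $kn(1-2\epsilon)$ is precisely the one that is locally integrable at $x=0$ if and only if $\epsilon<(kn+2)/(2kn)$, which explains the upper bound in the hypothesis.

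Next I would invoke the classical Forelli--Rudin estimates
\[
\int_{\mathbb{D}}\frac{(1-|\zeta|^2)^{-\epsilon}}{|1-\lambda\bar\zeta|^2}\,dV(\zeta)\leq C(1-|\lambda|^2)^{-\epsilon},\qquad \int_{\mathbb{B}_n}\frac{(1-|u|^2)^{-\epsilon}}{|1-\langle\lambda,u\rangle|^{n+1}}\,dV(u)\leq C(1-|\lambda|^2)^{-\epsilon},
\]
valid for $0<\epsilon<1$, applied with $\lambda=w_j/z^k$ (polydisc case, yielding $\prod_{j}(|z|^{2k}-|w_j|^2)^{-\epsilon}$ up to a power of $|z|$) or $\lambda=w/z^k$ (ball case, yielding $(|z|^{2k}-|w_1|^2-\cdots-|w_n|^2)^{-\epsilon}$). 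The remaining $x$--integral
\[
\int_{\mathbb{D}}\frac{|x|^{kn(1-2\epsilon)}(1-|x|^2)^{-\epsilon}}{|1-z\bar x|^2}\,dV(x)
\]
I would control by splitting $\mathbb{D}$ into $\{|x|\leq 1/2\}$, where $|1-z\bar x|^{-2}$ is bounded and integrability of $|x|^{kn(1-2\epsilon)}$ near zero is the only issue, and $\{|x|\geq 1/2\}$, where $|x|^{kn(1-2\epsilon)}$ is bounded by a constant (because $1-2\epsilon\leq 0$) and the standard one--variable Forelli--Rudin estimate produces $(1-|z|^2)^{-\epsilon}$.

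Recombining all of the pieces reproduces $h(z,w)^{-\epsilon}$ and $g(z,w)^{-\epsilon}$ on the right, up to a residual factor $|z|^{kn(2\epsilon-1)}$ that is at most $1$ since $|z|<1$ and $\epsilon\geq 1/2$; this is where the lower bound on $\epsilon$ enters. The main obstacle, I expect, will be the careful exponent bookkeeping across the change of variables, the Forelli--Rudin application, and the disc splitting---in particular, checking that the two--sided range $\epsilon\in[\tfrac{1}{2},\tfrac{kn+2}{2kn})$ emerges organically, with the upper bound forced by integrability at the puncture $x=0$ and the lower bound forced by the sign requirement on $|z|^{kn(2\epsilon-1)}$.
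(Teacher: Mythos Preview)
Your proposal is correct and follows essentially the same route as the paper: substitute $y=x^{k}u$ to pass to $\mathbb{D}^{\ast}\times\mathbb{D}^{n}$ or $\mathbb{D}^{\ast}\times\mathbb{B}_{n}$, apply the Forelli--Rudin estimates in the $u$-variables (the paper cites Rudin, Proposition~1.4.10), and then control the remaining one-variable integral in $x$ carrying the factor $|x|^{kn(1-2\epsilon)}(1-|x|^{2})^{-\epsilon}|1-z\bar x|^{-2}$. The only cosmetic difference is that for this last step the paper invokes Lemma~3.2 of Edholm--McNeal directly, whereas you outline the equivalent splitting into $\{|x|\le 1/2\}$ and $\{|x|>1/2\}$; both yield the same constraint $\epsilon<(kn+2)/(2kn)$ from integrability at the puncture and $\epsilon\ge 1/2$ from the residual factor $|z|^{kn(2\epsilon-1)}$, exactly as you identified.
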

                      \begin{proof}
                        We start with inequality (\ref{est1}), by Theorem \ref{kernelH}
                        \begin{align*}
                           \int_{\Omega_{k}}& \left|K_{\Omega_{k}}((z,w),(x,y))\right| h(x,y)^{-\epsilon}  \, dV(x,y)\\
                           &=\int_{\Omega_{k}} \frac{|x|^{-2kn\epsilon} (1-|x|^2)^{-\epsilon} \left( 1 - \left|\frac{y_1}{x^k}\right|^2 - \cdots - \left|\frac{y_n}{x^k}\right|^2\right)^{-\epsilon} }{\pi^{n+1} |z \overline{x}|^{kn} |1-z \overline{x}|^2 \left| 1 - \frac{ w_1 \overline{y}_1 }{(z \overline{x})^k} - \cdots - \frac{  w_n \overline{y}_n }{(z \overline{x})^k  } \right|^{n+1}}  \, dV(x,y)
                        \end{align*}
                      Make the substitution $y_j = x^k u_j$ for every $j=1,\ldots,n$. This transformation sends $\Omega_{k}$ to $\mathbb{D}^{*} \times \mathbb{B}_n$
                       \begin{align*}
                         \int_{\mathbb{D}^{*} \times \mathbb{B}_n} \frac{|x|^{kn-2kn\epsilon} (1-|x|^2)^{-\epsilon} \left( 1 - \left|u_1\right|^2 - \cdots - \left|u_n\right|^2\right)^{-\epsilon} }{\pi^{n+1} |z|^{kn} |1-z \overline{x}|^2 \left| 1 - \frac{ w_1 }{z^k} \overline{u}_1 - \cdots - \frac{  w_n  }{z^k } \overline{u}_n   \right|^{n+1}}  \, dV(x,u)
                       \end{align*}
                      Next by Proposition 1.4.10 of \cite{R}
                              \begin{align*}
                           \int_{\Omega_{k}}& \left|K_{\Omega_{k}}((z,w),(x,y))\right| h(x,y)^{-\epsilon}  \, dV(x,y)\\
                           &\leq C \int_{\mathbb{D}}  \frac{|x|^{kn-2kn\epsilon} (1-|x|^2)^{-\epsilon} }{\pi^{n+1} |z|^{kn} |1-z \overline{x}|^2} \left( 1 - \left|\frac{ w_1 }{z^k}\right|^2  - \cdots - \left|\frac{  w_n  }{z^k }\right|^2   \right)^{-\epsilon}   \, dV(x)
                        \end{align*}
                      If $ kn -2kn\epsilon> -2$, then by Lemma (3.2) in \cite{Edholm}
                        \begin{align*}
                           \int_{\mathbb{D}}&  \frac{|x|^{kn-2kn\epsilon} (1-|x|^2)^{-\epsilon} }{\pi^{n+1} |z|^{kn} |1-z \overline{x}|^2} \left( 1 - \left|\frac{ w_1 }{z^k}\right|^2  - \cdots - \left|\frac{  w_n  }{z^k }\right|^2   \right)^{-\epsilon}   \, dV(x) \\ & \leq C' \frac{(1-|z|^2)^{-\epsilon}}{|z|^{kn}} \left( 1 - \left|\frac{ w_1 }{z^k}\right|^2  - \cdots - \left|\frac{  w_n  }{z^k }\right|^2   \right)^{-\epsilon}
                        \end{align*}
                      Finally when $2kn\epsilon \geq kn$ we obtain the desired result. The estimation (\ref{est2}) can be obtained by the similar method and we omit the details.
                      \end{proof}

  Now we are ready to formulate the  main result
       \begin{thm}\label{main}
         The Bergman projection is a bounded operator on  $L^p(\Omega_k)$  and $L^p(\mathcal{H}_{k})$ if and only if $p \in \left(\frac{2nk+2}{nk+2},\frac{2nk+2}{nk} \right)$.
       \end{thm}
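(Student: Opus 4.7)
My plan is to prove sufficiency by Schur's test and sharpness by explicit test functions derived from the kernel's series expansion. I describe the argument for $\mathcal{H}_k$; the treatment of $\Omega_k$ is parallel, with the weight $g$ in place of $h$ and the kernel factored across $\prod_{j}(\eta^k-\nu_j)^2$ rather than $(\eta^k-\sum_j\nu_j)^{n+1}$.

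For sufficiency, note that $|K_{\mathcal{H}_k}((z,w),(x,y))|$ is Hermitian, so the bound supplied by the preceding lemma yields the two Schur estimates
\begin{align*}
\int_{\mathcal{H}_k} |K_{\mathcal{H}_k}|\, h(x,y)^{-sp'}\, dV(x,y) &\leq C\, h(z,w)^{-sp'}, \\
\int_{\mathcal{H}_k} |K_{\mathcal{H}_k}|\, h(z,w)^{-sp}\, dV(z,w) &\leq C\, h(x,y)^{-sp},
\end{align*}
whenever both $sp'$ and $sp$ lie in $[1/2,\,(kn+2)/(2kn))$. Taking $\phi = h^{-s}$, Schur's test then gives boundedness of $\mathbf{P}$ on $L^p(\mathcal{H}_k)$. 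A direct arithmetic check shows that such $s$ exists exactly when $p\in\bigl((2kn+2)/(kn+2),\,(2kn+2)/(kn)\bigr)$, the interval of the theorem.

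For the converse, I would rule out each endpoint by exhibiting $f\in L^p(\mathcal{H}_k)$ with $\mathbf{P}f\notin L^p(\mathcal{H}_k)$. Expanding the formula for $K_{\mathcal{H}_k}$ into a double series,
\begin{align*}
K_{\mathcal{H}_k}((z,w),(x,y)) = \frac{1}{\pi^{n+1}}\sum_{j,\,m\geq 0}(j+1)\binom{m+n}{n}(z\bar x)^{\,j-kn-km}\Bigl(\sum_{i=1}^{n}w_i\bar y_i\Bigr)^{m},
\end{align*}
one reads off the action of $\mathbf{P}$ on a monomial test function, since only terms matching holomorphic monomials survive. Choosing $f$ to be a suitable cut-off multiple of $\bar z^{\,a}$ with $a$ tuned to the critical threshold produces a term in $\mathbf{P}f$ whose $|z|$-exponent is exactly $-kn$, making its $L^p$ norm diverge at the upper endpoint $p=(2kn+2)/(kn)$. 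The lower endpoint then follows from the self-adjointness of $\mathbf{P}$, which makes $L^p$- and $L^{p'}$-boundedness equivalent.

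The main obstacle is the sharpness step: the exponent $a$ must be calibrated so that $f\in L^p(\mathcal{H}_k)$, a single surviving monomial in $\mathbf{P}f$ lands exactly on the critical curve, and all remaining contributions are $L^p$-integrable. The sufficiency step, by contrast, is essentially bookkeeping in Schur's test once the lemma is in hand.
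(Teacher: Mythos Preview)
Your approach matches the paper's: Schur's test with the weight from the preceding lemma for sufficiency, and a monomial test function plus duality for necessity. The sharpness step, which you flag as the main obstacle, is in fact simpler than you anticipate. The paper takes $f=\bar z^{\,kn}$ directly (bounded on the domain, hence in every $L^p$, so no cut-off is needed); since the monomials $z^{\alpha}w^{\beta}$ with $\beta_j\ge 0$ and $\alpha\ge -k(|\beta|+n)$ form a complete orthogonal system in $L^2_a$, rotation invariance gives $\mathbf{P}(\bar z^{\,kn})=C\,z^{-kn}$ as a \emph{single} monomial, with no ``remaining contributions'' to control. Its $L^p$ norm reduces to a constant times $\int_0^1 r^{\,1-pnk+2nk}\,dr$, which diverges precisely for $p\ge (2nk+2)/(nk)$.
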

\begin{proof}
 If $D$ is a Reinhardt domain, $f \in L^2_a(D) := \mathcal{O}(D) \cap L^2(D)$,  $f(z)=\sum_{\alpha \in \mathbb{Z}^n} a_{\alpha} z^{\alpha}$, then $\{  z^{\alpha} \colon \alpha \in \sum(f)  \} \subset L^2_a(D), $ where $\sum(f):=\{ \alpha \in \mathbb{Z}^n \colon a_{\alpha} \neq 0 \}$ (for proof see \cite{JP} p. 67). Thus it is easy to check, that the set $\{z^{\alpha} w_1^{\beta_1} \cdots w_n^{\beta_n} \colon   \beta_j \geq 0,  \alpha \geq   -k(|\beta|+n) \}$ is a complete orthogonal set for $L^2_a(\Omega_k)$ and $L^2_a(\mathcal{H}_k)$. Now it can be shown that
          \begin{align*}
            \mathbf{P}_{\Omega_k}(\overline{z}^{kn})(x,y)=\frac{C}{x^{nk}} \quad \text{and} \quad \mathbf{P}_{\mathcal{H}_k}(\overline{z}^{kn})(x,y)=\frac{C_1}{x^{nk}},
          \end{align*}
for some constants $C$ and $C_1$. Thus,
           \begin{align*}
             \|\mathbf{P}_{\Omega_k}(\overline{z})\|_p^p &\approx \int_{\Omega_k} \frac{1}{|x|^{pnk}} \,dV(x,y)  \\
              &=(2 \pi)^{n+1} \int_{0}^{1} \int_0^{r^{k}} \cdots \int_0^{r^{k}} r^{1-pnk} s_1 \cdots s_n \, ds_1 \cdots ds_n dr \\
              &=2\pi^{n+1} \int_{0}^{1} r^{1-pnk+2nk} \, dr
           \end{align*}
and
            \begin{align*}
             \|\mathbf{P}_{\mathcal{H}_k}(\overline{z})\|_p^p &\approx \int_{\mathcal{H}_k} \frac{1}{|x|^{pnk}} \,dV(x,y)  \\
              &=(2 \pi)^{n+1} \int_{0}^{1} \int_0^{r^{k}} \int_{S^{n-1}_{+}} r^{1-pnk} \rho^{2n-1} \omega  d\sigma(\omega) d\rho dr \\
              &=\frac{2\pi^{n+1}}{n!} \int_{0}^{1} r^{1-pnk+2nk} \, dr
           \end{align*}
These integrals diverges when $p\geq \frac{2+2nk}{nk}$, so $\mathbf{P}_{\Omega_k}(\overline{z}) \notin  L^p(\Omega_k)$ and $\mathbf{P}_{\mathcal{H}_k}(\overline{z}) \notin  L^p(\mathcal{H}_k)$ for this range of $p$. The fact that the Bergman projection is selfadjoint, together with H\"{o}lder's inequality, show that the $\mathbf{P}_{\mathcal{H}_k}$ and  $\mathbf{P}_{\Omega_k}$ also fails to be a bounded operator on $L^p(\Omega_k)$ and $L^p(\mathcal{H}_k)$, respectively when $p \in (1, \frac{2kn+2}{kn+2}]$. In order to prove boundedness it is enough to combine Lemma 3.1 and Schur's lemma (Lemma 2.4 in \cite{Edholm}) these yield that the operators $L^p(\Omega_k)$ and $L^p(\mathcal{H}_k)$ are bounded for $p \in \left(\frac{2nk+2}{nk+2},\frac{2nk+2}{nk} \right)$.
\end{proof}
\subsection{Concluding remarks}
    \begin{enumerate}
      \item Putting together Theorem \ref{main} and Theorem 3.1 from \cite{Edholm}, we have three different domains  $\Omega_{k}$, $\mathcal{H}_k$ and
        \begin{align*}
         \mathbb{H}_{nk}:=\left\{(z,w) \in \mathbb{C}^2  \colon  |w|^{nk} < |z| \right\}
        \end{align*}
        for which Bergman projection is bounded with the same range of $p$.
      \item For $\Omega_{k}$ and $\mathcal{H}_k$ we can also prove similar results to those obtained in \cite{Ch} related to weighted $L^p$ mapping behavior.
      \item It would be interesting to investigate a class of domains considered by Zapa{\l}owski \cite{Zap} which generalize the classical Hartogs triangle. Domains of the form
            \begin{align*}
              \left\{(z,w) \in \mathbb{C}^n \times \mathbb{C}^m \colon \sum_{j=1}^{n} |z_j|^{2p_j} < \sum_{j=1}^{m} |w_j|^{2q_j} <1 \right\}.
            \end{align*}
    \end{enumerate}

\bibliographystyle{amsplain}

\noindent Tomasz Beberok\\
Sabanci University\\
Orhanli, Tuzla 34956\\
Istanbul, Turkey\\
email: tbeberok@ar.krakow.pl

% ------------------------------------------------------------------------
\end{document}